\documentclass[12pt]{amsart}
\usepackage{amsfonts}
\usepackage{amsthm}
\usepackage{amsmath}
\usepackage{amssymb}
\usepackage{hyperref}
\usepackage{graphicx}
\setcounter{MaxMatrixCols}{30}
\usepackage{comment}
\usepackage[capitalise]{cleveref}

\providecommand{\U}[1]{\protect\rule{.1in}{.1in}}
\addtolength{\textheight}{10mm} \addtolength{\textwidth}{25mm}
\addtolength{\oddsidemargin}{-15mm}
\addtolength{\evensidemargin}{-15mm} \addtolength{\topmargin}{-10mm}
\newtheorem{set}{set}[section]
\newtheorem{theorem}[set]{Theorem}
\theoremstyle{plain}

\newtheorem{corollary}[set]{Corollary}

\newtheorem{lemma}[set]{Lemma}
\newtheorem*{lemma*}{Lemma}

\newtheorem*{problem*}{Problem}
\newtheorem{proposition}[set]{Proposition}

\newtheorem*{observation*}{Observation}
\theoremstyle{definition}
\newtheorem{definition}[set]{Definition}
\newtheorem{example}[set]{Example}

\newcommand{\integers}{\mathbb{Z}}
\allowdisplaybreaks[4]

\newcommand{\lamptwo}{\mathcal{L}_2}
\newcommand{\Z}{\mathbb{Z}}
\newcommand{\N}{\mathbb{N}}
\DeclareMathOperator{\GenCon}{GenCon}
\DeclareMathOperator{\sym}{Sym}

\newenvironment{customthm}[1]
{\innercustomthm}
{\endinnercustomthm}

\title{Medium-scale curvature at larger radii in finitely generated groups}
\author{Robert Kropholler and Brendan Mallery}

\begin{document}

\maketitle
\begin{abstract}
	In this paper we study medium scale curvature, a notion of Ricci curvature for groups. 
	We show that dead-end elements yield non-negative curvature. 
	We make use of related elements to find large classes of positive curvature in the lamplighter and Houghton's group. 
	We also study the effect of increasing the radius of curvature and give examples of positive curvature for arbitrary radius.
	Finally we make some comparisons between this notion of curvature and the Ricci curvature introduced by Ollivier.
\end{abstract}
\section{Introduction}
Various notions of curvature have played a prominent role in geometric group theory. These notions ranges from large scale notions of hyperbolicity \cite{gromov, ghys} to small scale notions such as non-positive curvature \cite{bridson}. 
In \cite{medscale}, a new notion of curvature for a group was defined.
This notion was inspired Ollivier's definition of discrete Ricci curvature \cite{ollivier}. 
In \cite{ollivier}, the classical Ricci curvature is interpreted as the average distance that points on a sphere are moved by parallel transport when compared to the center of the sphere. 
I.e. a point $x$ has negative Ricci curvature if when we transport the sphere centered at $x$ to the sphere centered at a nearby point $y$, then on average points move further than $d(x, y)$. 
Similar notions work for positive and zero curvature. 

For arbitrary discrete spaces, Ollivier relies on the $L^1$ Wasserstein distance to define this curvature. 
This involves taking the infimum over functions defined on probability measures on the spheres about $x$ and $y$. 
In \cite{medscale}, this notion is studied for Cayley graphs. 
However, in the setting of group theory there is a natural correspondence between the sphere centered at $x$ and the sphere centered at $y$. 
Namely, we can translate by the group action, bypassing the need to take an infimum.

Let $G$ be a group with generating set $S$, and denote the Cayley graph with respect to $S$ as $\Gamma(G,S)$.
For any $x\in G$, let $|x|$ denote its length in the $(G,S)$ word metric.
Write $B_r(x)=\{g\in G\mid |x^{-1}g|\leq r\}$ and $S_r(x)=\{g\in G\mid |x^{-1}g|=r\}$ for the ball and sphere of radius $r$ centered at $x$.
\begin{definition}
We define $\mathcal{S}_r(x,y)$, the {\em spherical comparison distance of radius $r$} to be: $$\mathcal{S}_r(x,y):=\frac{1}{|S_r|}\sum_{w\in S_r}d(xw,yw).$$

Similarly, we define the {\em comparison distance of a ball of radius $r$} to be: $$\mathcal{B}_r(x,y):= \frac{1}{|B_r|}\sum_{w\in B_r}d(xw,yw).$$ 
$\mathcal{S}_r(x,y)$ (resp. $\mathcal{B}_r(x,y)$) is the average distance between $xw$ and $yw$, over all $w\in S_r$ (resp. $B_r$).
\end{definition}

If on average translating points $x,y$ by $w\in S_r$ moves them closer together than they were before, $\mathcal{S}_r(x,y) < d(x,y)$, while if it on average moves them further away, then $\mathcal{S}_r(x,y)>d(x,y)$ (similar for $\mathcal{B}_r(x,y)$).
Using these quantities, a new Ricci curvature can be defined via 
$$\kappa^\mathcal{B}_r(x,y)=\frac{d(x,y)-\mathcal{B}_r(x,y)}{d(x,y)} \mbox{ or } \kappa^\mathcal{S}_r(x,y)=\frac{d(x,y)-\mathcal{S}_r(x,y)}{d(x,y)},$$ 
which is called the \textit{comparison curvature}.
Positive comparison curvature corresponds to the case where $\mathcal{S}_r(x,y) < d(x,y)$ while negative comparison curvature corresponds to the case where $\mathcal{S}_r(x,y)>d(x,y)$.

In \cite{medscale} it was observed that these quantities can be rewritten in a more computable form: By symmetry, $\kappa_r^\mathcal{B}(x,y)=\kappa_r^\mathcal{B}(e,x^{-1}y)$, so $d(xw,yw)$ can be rewritten as $|w^{-1}x^{-1}yw|$ (when $e$ is used as a comparison point, $\kappa_r^\mathcal{B}(e,x^{-1}y)$ is written $\kappa_r^\mathcal{B}(x^{-1}y)$).
They also restricted their attention mostly to the case where $r=1$, in which case $\kappa_1^\mathcal{B}(g)$ was rewritten $\kappa(g)$.
In the case of $r=1$, $\mathcal{S}_1(g)=\mathcal{B}_1(g)$ is given by the average word length of the conjugate of $g$ by the generating set, which was abbreviated to $\GenCon(g)=\frac{1}{|S|}\sum_{a\in S}|a^{-1}ga|$.
Thus, the notion of Ricci curvature studied in \cite{medscale} was ultimately defined to be $$\kappa(g)=\frac{|g|-\GenCon(g)}{|g|}.$$

It is a rather delicate property–for instance, it relies on a specific choice of generating set.
The authors of \cite{medscale} were able to show a number of properties of $\kappa(g)$, such as characterizing uniform zero and positive curvature.
More recently, it has been shown that uniform negative curvature implies exponential growth \cite{expgrowth}.
It was noted that positive curvature seemed in some sense rare in many groups.
Indeed, it is fairly easy to guess which elements have zero or negative curvature in natural generating sets, as one can look to central elements for zero curvature, while any kind of ``free-ness'' tends to yield negative curvature.
In contrast, finding positive curvature often required more contrived generating sets, and there were no known usual suspects that could be expected to yield positive curvature.
Another observation was that restricting $\kappa(g)$ to radius 1 was in some sense an arbitrary choice, and that it would be interesting to study properties of this curvature at larger scales.
Tying this into the previous question, it was observed that some instances of positive curvature disappeared upon increasing this radius, leading one to speculate whether positive curvature is possibly an artefact of small radius.

In this note, we address these questions, and find examples of positive curvature that are stable under increasing comparison radius.
Notably, we find a class of elements that seem to readily yield non-negative curvature with respect to standard generating sets, namely dead-end elements.

\begin{definition}
Let $(G,S)$ be a group $G$ with generating set $S$.
Then a \textit{dead-end element} $g\in (G,S)$ is an element such that no geodesic ray from the identity to $g$ can be extended past $g$.

Furthermore, $g$ has \textit{depth $k$} if the minimum length of a path from $g$ to an element strictly further away from the origin is of length $k$.

We say $g$ has \textit{strict depth} $k$ if $|g|>|ga_1|>|ga_1a_2|>\dots >|ga_1a_2\dots a_k|$ for any $a_1,\dots ,a_k\in S$.
\end{definition}

Dead-end elements seem to be a useful place to look for non-negative curvature. Namely we have the following:

\begin{proposition}
Let $(G,S)$ be a group $G$ with generating set $S$.
 Let $g\in G$ be a strict dead end element of depth $k\geq 1$.
 Then $\kappa_r^S(g)\geq 0$ for all $r<k$.
\end{proposition}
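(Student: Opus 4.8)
The plan is to prove the pointwise inequality $|w^{-1}gw| \le |g|$ for every $w \in S_r$, from which the proposition is immediate by averaging. First I would unwind the definitions: taking $e$ as comparison point, $\kappa_r^{\mathcal S}(g) = \frac{|g| - \mathcal S_r(e,g)}{|g|}$ where $\mathcal S_r(e,g) = \frac{1}{|S_r|}\sum_{w \in S_r} |w^{-1}gw|$, and since a strict dead-end element of depth $k \ge 1$ is necessarily nontrivial we have $|g| > 0$; hence $\kappa_r^{\mathcal S}(g) \ge 0$ is equivalent to $\mathcal S_r(e,g) \le |g|$. Thus it suffices to bound each summand $|w^{-1}gw|$ by $|g|$.

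For the key step, fix $w \in S_r$ and choose a geodesic spelling $w = a_1 a_2 \cdots a_r$ with $a_i \in S$; recall $r < k$. The idea is that $gw$ lies deep inside the ball of radius $|g|$ about $e$: the strict depth hypothesis, applied to the sequence $a_1, \dots, a_r$ (read off the length-$r$ prefix of a length-$k$ reduced word, padding if necessary), gives the descending chain $|g| > |ga_1| > |ga_1a_2| > \cdots > |ga_1 \cdots a_r|$, and as each inequality decreases a nonnegative integer it drops word length by at least one, so $|gw| = |ga_1\cdots a_r| \le |g| - r$. I would then regroup the conjugate as $w^{-1}gw = w^{-1}\cdot(gw)$ and apply the triangle inequality with $|w^{-1}| = |w| = r$:
$$|w^{-1}gw| \le |w^{-1}| + |gw| \le r + \bigl(|g| - r\bigr) = |g|.$$
Averaging over $w \in S_r$ yields $\mathcal S_r(e,g) \le |g|$, hence $\kappa_r^{\mathcal S}(g) \ge 0$.

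I do not anticipate a deep obstacle: once one sees that the conjugate must be split as $w^{-1}\cdot(gw)$ rather than peeled apart one generator at a time, the computation is routine. (The generator-by-generator approach fails because after a single conjugation $a_1^{-1}ga_1$ need not be a dead-end element, so there is nothing to iterate; one genuinely wants to apply the whole descending chain to $gw$ at once.) The one point that needs care is the interpretation of the strict depth condition: it has to be understood for \emph{reduced} words $a_1\cdots a_k$, since the choice $a_2 = a_1^{-1}$ flatly contradicts the literal statement, and one should check that a geodesic spelling of $w$ is admissible and extends to a reduced word of length $k$. Finally, the same argument applies verbatim with $B_r$ replacing $S_r$: any $w \in B_r$ satisfies $|w| \le r < k$, so $|w^{-1}gw| \le |g|$ as before, and hence $\kappa_r^{\mathcal B}(g) \ge 0$ too.
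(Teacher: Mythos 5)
Your proposal is correct and follows essentially the same route as the paper: the paper's proof also observes that strict depth $k>r$ forces $|gw|\leq |g|-r$ for $w\in S_r$ and then bounds $d(w,gw)\leq |g|$ by the triangle inequality before averaging. Your write-up merely makes explicit the steps the paper leaves implicit (the splitting $w^{-1}\cdot(gw)$, the reduced-word reading of the strict-depth condition, and the extension to balls).
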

\begin{proof}
	Consider the quantity $d(w, gw)$ for $w\in S_r$. 
	Since $g$ has strict depth $k>r$ we see that $|gw|\leq |g|-r$ and so we see that $d(gw, w)\leq |g|$.
	Thus we see that the curvature of $g$ is $\geq 0$. 
\end{proof}

We study such elements in the Lamplighter group $\lamptwo$ and Houghton's group $H_2$. 
In both these cases we find that the dead-end elements have 0 curvature. 
We find positive curvature from related elements known as backtrack elements. 

\begin{definition}
Let $G$ be a group, and let $g\in G$ be a dead end element of depth $k$.
Then $w$ is a {\em backtrack element} if $w=gw'$ for some $w'\in G$ of length less than $k$, and $|w|\leq|g|$.
\end{definition}

In Sections 2, we show that certain backtrack elements yield positive curvature $\kappa_r$ in the Lamplighter group $\lamptwo$. 

\begin{customthm}{\cref{thm:lllargeradii}}
Let $\integers_2 \wr \integers$ be the Lamplighter group $\lamptwo$ with the standard generating set $\{a,t\}$.
 Then for arbitrary choice of $r$ there are backtrack elements $g$ such that $\kappa_r^S(g)>0$.
\end{customthm}

In section 3 we prove the analogous theorem for Houghton's group $H_2$. 

\begin{customthm}{\cref{houghton}}
	Let $H_2$ be Houghton's group, with the standard generating set $\{s, \sigma\}$a.
	Then for arbitrary choice of $r$, there are backtrack elements $g$ such that $\kappa_r^S(g)>0$.
\end{customthm}

In \cite{medscale}, it was shown that the discrete Heisenberg group $H(\integers)$ has a positive density of elements with all three signs of curvature $\kappa_1^S$.
 In Section 4, we generalize this result and show that one still has positive density of all three signs in $H(\integers)$ with respect to an arbitrary comparison radius:

 \begin{customthm}{\cref{heisenberg}}
There is a positive density of elements $g\in H(\integers)$ satisfying $\kappa_r^\mathcal{S}(g)>0$, $\kappa_r^\mathcal{S}(g)=0$ and $\kappa_r^\mathcal{S}(g)<0$ for arbitrary $r$.
\end{customthm}

The authors thank Jennifer Taback for helpful conversations on the project.

\section{Positively curved elements in the lamplighter group}

In this section, we will give elements of the lamplighter group $\Z_2\wr\Z$, which have positive medium scale curvature.
We will show that these elements can be taken so that the comparison radius is arbitrarily large. 

Recall, the lamplighter group $\lamptwo$ is the semidirect product $\mathcal{L}_2 = (\oplus_{i\in\Z}\Z_2)\rtimes \Z$.
Here $\Z$ acts by permuting the factors. This group is generated by two elements $a, t$ where $a$ is a generator of one of the copies of $\Z_2$ and $t$ is a generator of $\Z$. 

This group is often called the Lamplighter group $\lamptwo$, due to the following interpretation: Consider a street extending infinitely in both directions.
On this street there is a set of lamps, indexed by the integers, which can either be on or off.
A lamplighter starts at the lamp indexed by zero, and can either move one position to the right (apply $t$), one position to the left (apply $t^{-1}$) or switch the state of the lamp he is currently standing at (apply $a$).
The elements of the lamplighter group $\lamptwo$ are given by configurations of the lamplighter's final position and only finitely many lamps turned on.
We will refer to this as the ``lamplighter picture.''

In \cite{taback}, it was shown that any element of the lamplighter group has a geodesic spelling given as follows: Let $a_i=t^{-i}at^i$.
Suppose that the element $g\in \lamptwo$ is the configuration with lit lamps at $\{-j_k,-j_{k-1},\dots ,-j_1,i_1,i_2,\dots ,i_l\}$ (where $j_s$ is positive, and the set of indices is ordered from lowest to highest value) and lamplighter's final position is at lamp $m$.
If $m\geq 0$, then a geodesic spelling for $g$ is given by $a_{-j_k}a_{-j_{k-1}},\dots ,a_{-j_1}a_{i_1}\dots a_{i_l}t^m$. 
If $m<0$ then the geodesic spelling is given by $a_{i_l}\dots a_{i_1}a_{-j_1}\dots a_{-j_k}t^m$.
In \cite{taback}, an explicit formula for word length is also obtained: Denote the length of an element $g\in\lamptwo$ to be $D(g)$.
Then $$D(g)=k+l+min\{2j_l+i_k+|m-i_k|,2i_k+j_l+|m+j_l|\},$$ where $k$ is the number of lit lamps with positive index, $l$ is the number of lit lamps with negative index, $i_k$ is the index of the rightmost lit lamp, $j_l$ is the index of the leftmost lit lamp and $m$ is the final position of the lamplighter.

We now look for the elements of positive curvature.
The elements come from the dead elements $d_m$ from \cite{taback}, where $d_m = a_0a_1\dots a_ma_{-1}\dots a_{-m}$.
The dead end depth of the element $d_m$ is $m$. 
\begin{example}
$d_3=a_0a_1a_2a_3a_{-1}a_{-2}a_{-3}$ is a dead end element of depth 3 with length 19.
The following set of elements is a path from $d_3$ to an element of length 20: $d_3, d_3t, d_3t^2,d_3t^3, d_3t^4, d_3t^5, d_3t^6, d_3t^7$.
The lengths of the elements in this path are: 19, 18, 17, 16, 17, 18, 19, 20.
\end{example}

In the ``lamplighter picture,'' an element $d_m$ corresponds to the lamplighter lighting up all lamps from $[-m,m]\subset \integers$ and then returning to the origin.
Elements of the form $d_mt^i$ can be viewed as the lamplighter's ``escape path'' from the dead end element $d_m$, with length decreasing until $|i|\geq m$.
Notice that these are examples of backtrack elements from the dead end word $d_m$.
For any $d_m$, there are $2m-1$ such elements of this form.

\begin{proposition}
The set of elements $\{d_mt^k\}_{0<|k|<m}$ have positive curvature for all $m$.
\end{proposition}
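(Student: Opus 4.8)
The plan is to show directly that $\GenCon(g)<|g|$ for every $g=d_m t^k$ with $0<|k|<m$, which is exactly the assertion $\kappa(g)=\bigl(|g|-\GenCon(g)\bigr)/|g|>0$. I would first reduce to the case $k>0$: the ``street‑reversing'' automorphism of $\lamptwo$ that fixes $a$ and inverts $t$ preserves the generating set $\{a,t\}$ and carries $d_m t^k$ to $d_m t^{-k}$, so $\kappa$ is unchanged and we may assume $1\le k\le m-1$ (in particular $m\ge 2$). In the lamplighter picture $g$ is the configuration with every lamp on $[-m,m]$ lit and the lamplighter at position $k$, so Taback's length formula gives $|g|=(2m+1)+\min\{4m-k,\ 4m+k\}=6m+1-k$.

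Next I would compute the conjugates $aga$, $tgt^{-1}$ and $t^{-1}gt$ as explicit lamplighter configurations. Conjugation by $t^{\pm1}$ fixes the lamplighter and merely slides the lit interval $[-m,m]$ by one step (to $[-m-1,m-1]$ in one case and $[-m+1,m+1]$ in the other), while conjugation by $a$ toggles exactly the lamps at the origin and under the lamplighter, turning both off because $0$ and $k$ lie in the open interval $(-m,m)$ and so were lit in $g$. Thus $aga$ is the configuration with lamps $[-m,m]\setminus\{0,k\}$ lit and lamplighter at $k$, and each $t^{\pm1}gt^{\mp1}$ has $2m+1$ lit lamps occupying an interval that still contains the lamplighter's position $k$ (so the lamplighter never has to detour beyond the lit block). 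Plugging each configuration into Taback's formula is then a short calculation: for $aga$ only the lamp count changes, dropping by $2$, so $|aga|=(2m-1)+(4m-k)=|g|-2$; for $t^{\pm1}gt^{\mp1}$ the minimizing branch of the $\min$ is the same one active for $g$ and still evaluates to $4m-k$, so $|t^{\pm1}gt^{\mp1}|=(2m+1)+(4m-k)=|g|$.

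Combining these, conjugation by $a$ strictly shortens $g$ by $2$ while conjugation by $t^{\pm1}$ leaves its length unchanged --- intuitively because $a$ un‑toggles two superfluous lamps of the dead‑end block, whereas $t^{\pm1}$ only slides that block, a move that the symmetric, over‑wide interval $[-m,m]$ absorbs at no cost. Hence $\GenCon(g)=\tfrac13\bigl((|g|-2)+|g|+|g|\bigr)=|g|-\tfrac23<|g|$ (and one gets the same strict inequality under the other standard conventions for which generator powers are averaged), so $\kappa(g)=\tfrac{2}{3|g|}=\tfrac{2}{3(6m+1-k)}>0$. I expect the one genuinely fiddly point to be the bookkeeping in Taback's formula for the translated configurations $t^{\pm1}gt^{\mp1}$: one must verify that sliding the interval by a single step, with the lamplighter held at $k$, never makes $k$ overshoot an endpoint of the interval in a way that switches which branch of the $\min$ is minimal --- the hypothesis $k\le m-1$ is precisely what rules this out --- and it is worth separately checking the boundary case $m=2$ by hand. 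Everything else is routine.
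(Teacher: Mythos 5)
Your proposal is correct and follows essentially the same route as the paper: compute $|g|$, $|aga|$, and $|t^{\pm1}gt^{\mp1}|$ from Taback's length formula, observe that conjugation by $a$ turns off the lamps at $0$ and $k$ (shortening by $2$) while conjugation by $t^{\pm1}$ merely slides the lit interval (preserving length), and conclude $\kappa(g)>0$. The only additions beyond the paper's argument are the explicit reduction of $k<0$ to $k>0$ via the $t\mapsto t^{-1}$ automorphism (the paper invokes symmetry) and the explicit value $\kappa(g)=\tfrac{2}{3|g|}$, both of which check out.
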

\begin{proof}
	Consider an element $g=d_mt^k$, $0<k<m$.
	In order to calculate the curvature $\kappa(g)=\frac{|g|-GenCon(g)}{|g|}$ we will first need to determine $D(g), D(t^{-1}gt), D(tgt^{-1}), D(aga)$.

	For the first note that conjugation by $t$ or $t^{-1}$ has the property of moving the lit lamps one to the left or right.
	By symmetry we can focus on the case that all the lamps are lit in the interval $[-m-1, m-1]$ and the lamplighter ends at position $k$. 
	The length of such an element is $6m-k+1$ by the formula above. 
	We can also see that $D(g) = 6m-k+1$. 

	For the length of $aga$, we can see that we have changed the set of lit lamps. 
	We can see that the set of lamps in $[-m, m]$ are lit with the exceptions of lamps at position 0 and $k$. 
	Thus, from the formula above, we see that $D(aga) = 6m-k-1$. 

	Thus we can conclude that $\kappa(g)>0$. 
\end{proof}

The ``defect'' that results in positive curvature is coming from conjugation by $a$ switching the lights at position 0 and position $k$.  

\subsection{Other wreath products}

Using this observation we can create the same phenomenon in other wreath products
Consider $\mathcal{L}_n=\integers_n\wr \integers$.
This is the lamplighter group of rank $n$, and can be understood easily by generalizing the lamplighter picture of $\lamptwo$, where now instead of lamps with two settings ``on'' and ``off'' we have n possible settings (say, $n$ different colors a lamp can take).
The word length formula for $\lamptwo$ only depends on the indices of lit lamps and the final position for the lamplighter, so it is easy to see that it generalizes directly to $\mathcal{L}_n$ for the generating set $\{s_1,s_2,\dots ,s_{n-1},t\}$ where $s_i=i\in\integers_n$.
From here, it is also not hard to see that the same is true for $A\wr \integers$, for any finite group $A$.
We can use the generating set $A\smallsetminus\{e\}\cup\{t\}$, where now the lamps change colors based on the multiplication in $A$.
Since the word length formula is again essentially unchanged, we can define analogues of $d_m$ in $A\wr \integers$ by replacing instances of $a_i=t^iat^{-i}$ with $s_{i,k}=t^i s_k t^{-i}$ for any choices of $s_k\in A\smallsetminus \{e\}$, to get dead end elements of depth at least $m$.

Just as before, we can backtrack from the $d_m$ analogues in $A\wr \integers$.
There will still be ``defects'' in the form of unlit lamps in $[-m,m]$.
The main difference between this situation and the case with $\lamptwo$ is that now some care must be taken when determining which elements turn lamps off.
In particular, conjugating $g=d_mt^k$ by $a$ in $\lamptwo$ is guaranteed to turn off lamps at the origin and at index $k$, while now it is possible that conjugating $g$ by $s_i$ will simply switch the lamps at the origin and at index $k$ to a different color, thus not decreasing word length.
However, since every possible color has length 1 in this generating set, in these cases word length will remain unchanged under conjugation, except in the cases where $s_i$ is the inverse of the color at the origin or the color at index $k$.
This means that conjugating $g$ by $w\in S$ at least keeps word length the same, and some cases are guaranteed to strictly lower the word length of $g$.
Thus we have proved the following:
\begin{corollary}
Let $A$ be a finite group.
Consider $A\wr \integers$ with generating set $$S = A\smallsetminus\{e\}\cup\{t\}.$$
Let $s_{i,k}=t^is_kt^{-i}$.
Then for any $m$ and $l$ such that $|l|<m$, elements of the form $s_{0,k_0}s_{1,k_1}\dots s_{m,k_m}s_{-1,k_{-1}}\dots s_{-m,k_{-m}}t^l$ have positive curvature for any choices of $s_{j,k_j}$.
\end{corollary}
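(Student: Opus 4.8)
The plan is to mimic the proof of the preceding Proposition on the elements $\{d_mt^k\}$ of $\lamptwo$, the only genuinely new ingredient being the passage from a single ``on/off'' lamp generator to the full colour set $A\smallsetminus\{e\}$. By the symmetries $t\leftrightarrow t^{-1}$ and $s_{i,k}\leftrightarrow s_{-i,k}$ we may assume $0<l<m$. Write $g=s_{0,k_0}s_{1,k_1}\cdots s_{m,k_m}s_{-1,k_{-1}}\cdots s_{-m,k_{-m}}t^l$; in the lamplighter picture $g$ is the configuration in which every lamp of $[-m,m]$ is lit, the one at index $i$ carrying the colour $s_{k_i}\in A\smallsetminus\{e\}$, and the lamplighter stands at position $l$. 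With $S=(A\smallsetminus\{e\})\cup\{t,t^{-1}\}$, so $|S|=|A|+1$, it suffices to show $\GenCon(g)<D(g)$, and for this I would establish three things: $D(t^{-1}gt)=D(tgt^{-1})=D(g)$; that $D(s^{-1}gs)\le D(g)$ for every $s\in A\smallsetminus\{e\}$; and that $D(s^{-1}gs)<D(g)$ for at least one such $s$.

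First I would dispatch the $t$-conjugates. Conjugating $g$ by $t^{\pm1}$ translates the lit interval by one step while fixing the lamplighter position, and the word-length formula for $A\wr\integers$ sees only the set of lit indices, the leftmost and rightmost of them, and the lamplighter's position — never the colours, because in this generating set every non-trivial element of $A$ has length $1$. Hence the relevant computation is literally the one carried out for $\lamptwo$ in the preceding Proposition, and it gives $D(t^{-1}gt)=D(tgt^{-1})=D(g)$ (here $|l|<m$ guarantees the translated interval still straddles the origin).

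Next, the colour conjugates. A short computation in the semidirect product shows that for $s\in A\smallsetminus\{e\}$ the element $s^{-1}gs$ has the same configuration and lamplighter position as $g$, except that the colour at index $0$ is replaced by $s^{-1}s_{k_0}$ and the colour at index $l$ by $s_{k_l}s$. Because $0$ and $l$ lie strictly inside $[-m,m]$ (this is the one place $|l|<m$ is used), the leftmost and rightmost lit indices are unchanged, so $D(s^{-1}gs)$ can differ from $D(g)$ only through the number of lit lamps. Since $s^{-1}s_{k_0}$ and $s_{k_l}s$ either are trivial (lamp extinguished) or again have length $1$ (lamp merely recoloured), the lamp count can only drop or stay the same: thus $D(s^{-1}gs)\le D(g)$ for all $s$. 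Taking $s=s_{k_0}$ extinguishes the lamp at the origin, lowering the count by at least one, so $D(s_{k_0}^{-1}\,g\,s_{k_0})\le D(g)-1$. Assembling the bounds, $\GenCon(g)\le\frac{1}{|A|+1}\big((|A|-1)D(g)-1+2D(g)\big)=D(g)-\frac{1}{|A|+1}<D(g)$, whence $\kappa(g)>0$.

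The proof is thus largely bookkeeping; the step demanding real care is the analysis of the colour conjugates, and the point there — already isolated in the discussion preceding the statement — is that \emph{recolouring a lamp is free} in the generating set $(A\smallsetminus\{e\})\cup\{t,t^{-1}\}$. This is exactly what prevents any conjugate from being longer than $g$, so the single guaranteed strict decrease coming from $s=s_{k_0}$ cannot be cancelled out by the other generators; it is also the reason the argument goes through verbatim for every choice of the colours $s_{j,k_j}$.
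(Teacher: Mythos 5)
Your proof is correct and is essentially the paper's own argument: the paper, too, reduces everything to the facts that the $\lamptwo$ word-length formula for $A\wr\integers$ sees only the lit positions and the lamplighter's endpoint (recolouring is free since every nontrivial colour has length $1$), that conjugation by a colour only recolours (no change) or extinguishes (strict decrease, guaranteed for $s=s_{k_0}$) the lamps at positions $0$ and $l$, and that the $t^{\pm1}$-conjugates have unchanged length, so every conjugate is no longer than $g$ and at least one is strictly shorter. One caveat: your symmetry reduction to $0<l<m$ quietly drops $l=0$, which the statement as written ($|l|<m$) permits but where the curvature is in fact zero rather than positive (every generator then preserves the length); the intended hypothesis, as in the preceding $\lamptwo$ proposition, is $0<|l|<m$, and your reading is the right one.
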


We now come back to the case of the lamplighter group $\mathcal{L}_2$. 
We will show that the positive curvature exhibited earlier is persistent under taking larger radii. 
Previously we conjugated by generators, however, for curvature over a sphere of radius $r$, we must instead conjugate by all elements of length $r$.
Let us warm up with an example for radius 2. 

\begin{example} 
	We will now consider $\kappa_2^\mathcal{S}(d_mt^k)$.
	Note where before we only required that $k<m$ (i.e. $|m-k|\geq 1$), we now require $|m-k|\geq 2$.
	There are six elements of length two in $\lamptwo$: $at$, $ta$, $t^2$ and their inverses.
	By calculation, we have the following: 
	$$D(at g t^{-1} a) = D(ta g at^{-1}) =6m-k-3$$ 
	$$D(at^{-1}gta)=D(t^{-1}agat)=6m-k-3$$ 
	$$D(t^{-2}gt^{2})=D(t^{2}gt^{-2})=D(g)=6m-k+1.$$
	Then $\frac{1}{|\mathcal{S}_2|}\sum_{w\in \mathcal{S}_2}|w^{-1}gw|=6m-k-\frac{4}{3}$, so $\kappa_2^\mathcal{S}(d_mt^k)=\frac{5}{2|d_mt^k|}>0$.
\end{example}

We now show how to extend this to get elements of positive curvature for arbitrary radius. 
These elements will be of the form $d_mt^k$ for appropriately chosen $k, m$. 
We begin with the following lemma.

\begin{lemma}\label{llconjtr}
	Suppose that $r, k>0$ and $m>k+r$. Then $$D(t^{-r}d_mt^{k+r})=D(t^rd_mt^{k-r})=D(d_mt^k).$$
\end{lemma}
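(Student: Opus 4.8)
The plan is to evaluate each of the three word lengths directly from Taback's formula, after first translating the elements into the lamplighter picture. Since $d_m$ lies in the base group $\bigoplus_{i\in\Z}\Z_2$, it is the configuration with every lamp in the interval $[-m,m]$ lit and the lamplighter at $0$; conjugating a base-group element by $t^{\pm r}$ keeps the lamplighter at $0$ and merely translates the set of lit lamps by $r$ (the left--right symmetry of $d_m$ makes the direction of the translation immaterial). Hence $t^{-r}d_mt^{k+r}=(t^{-r}d_mt^{r})t^{k}$ is the configuration whose lit lamps form an interval of the form $[-m+r,\,m+r]$ (or its mirror image) with the lamplighter at position $k$; likewise $t^{r}d_mt^{k-r}=(t^{r}d_mt^{-r})t^{k}$ has lit lamps on $[-m-r,\,m-r]$ with the lamplighter at $k$; and $d_mt^k$ has lit lamps on $[-m,m]$ with the lamplighter at $k$. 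In all three cases the number of lit lamps is $2m+1$.

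Next I would feed each configuration into the length formula $D(g)=N+\min\{\,2j+i+|p-i|,\ 2i+j+|p+j|\,\}$, where $N$ is the number of lit lamps, $i$ is the index of the rightmost lit lamp, $-j$ the index of the leftmost, and $p$ the lamplighter's final position. With $N=2m+1$ and $p=k$ throughout, the first branch of the minimum gives, respectively, $2m+m+(m-k)$, then $2(m-r)+(m+r)+(m+r-k)$, then $2(m+r)+(m-r)+(m-r-k)$, each of which simplifies to $4m-k$: translating the lit interval by $r$ changes $2j$ by $\mp 2r$, $i$ by $\pm r$, and $|p-i|$ by $\pm r$, for a net change of $0$. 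Since $k>0$, the second branch evaluates to $4m+k$ in each case and is never smaller. Therefore $D(t^{-r}d_mt^{k+r})=D(t^{r}d_mt^{k-r})=D(d_mt^k)=(2m+1)+(4m-k)=6m-k+1$, which is the assertion.

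The step I expect to require the most care is justifying that the first branch of the minimum is the one attained and that the absolute values open up with the signs used above; this is exactly where the hypotheses enter. One needs each rightmost lit index to be positive, which for the most constrained configuration $[-m-r,\,m-r]$ forces $m>r$, and one needs the lamplighter's final position $k$ to lie weakly to the left of the rightmost lit lamp in every case, the binding inequality being $k\le m-r$, i.e.\ $m\ge k+r$; together with $k>0$ these are precisely the stated hypotheses $r,k>0$ and $m>k+r$. Once this regime is pinned down the remaining arithmetic is routine. (If $m$ were allowed to equal $k+r$ the three lengths would still agree, but $m<k+r$ genuinely breaks the equality, so the strict inequality is the natural hypothesis.)
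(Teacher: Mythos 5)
Your proof is correct and follows essentially the same route as the paper: identify the conjugated elements as configurations with the lit interval translated by $r$ and the lamplighter at $k$, then substitute into the Cleary--Taback length formula to get $6m-k+1$ in all three cases. You simply carry out the branch analysis of the minimum and the absolute values explicitly, which the paper leaves as "substitute into the formula."
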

\begin{proof}
	The word length formula for $\lamptwo$ depends only on the lamps which are lit and the final position of the lamplighter.
	Considering the case of $t^{-r}d_mt^{k+r}$ we see that all the lamps in the interval $[-m+r, m+r]$ are lit and the lamplighter ends at position $k$. 
	When we substitute this into the formula for $D(g)$ we arrive at $6m-k+1$ which is exactly $D(d_mt^k)$.
	
	In the other case the set of lit lamps are all lamps in the interval $[-m-r, m-r]$ and the lamplighter also ends at position $k$. 
\end{proof}

We can in fact strengthen the previous lemma as follows.
\begin{lemma}\label{llconjgen}
	Suppose that $w$ is an element of the lamplighter group such that the lamps $-m, m$ are lit and the lamplighter is at position $k\in (-m, m)$. 
	If $m>|k|+|r|$, then $D(t^rwt^{-r}) = D(w)$. 
\end{lemma}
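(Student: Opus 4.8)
The plan is to reduce the statement to the word-length formula of \cite{taback} recalled above. The key observation is that conjugation by $t^r$ is transparent in the lamplighter picture: if $w$ is the configuration with lit lamp set $C$ and lamplighter at position $k$, then $t^r w t^{-r}$ is the configuration with lit lamp set $C+r$ and lamplighter \emph{still} at position $k$. In particular $t^r w t^{-r}$ has the same number of lit lamps as $w$, and its leftmost and rightmost lit lamps are those of $w$ translated by $r$.

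I would first rewrite the length formula in a form adapted to the present situation. Suppose $g$ has at least one lit lamp of negative index and at least one of positive index; let $\ell<0$ be its leftmost lit lamp, $\rho>0$ its rightmost, $L$ the total number of lit lamps, and $p\in[\ell,\rho]$ the lamplighter position. Feeding $-\ell$ (the absolute value of the leftmost index), $\rho$, $L$ and $p$ into the formula and using $|\rho-p|=\rho-p$ and $|{-\ell}+p|=p-\ell$, the two terms inside the minimum become $-2\ell+2\rho-p$ and $-2\ell+2\rho+p$, so $D(g)=L+2\rho-2\ell-|p|$.

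Applying this to $w$: since the lamps $-m$ and $m$ are lit we have $\ell\le -m<0<m\le\rho$, and the lamplighter position $k$ lies in $(-m,m)\subseteq[\ell,\rho]$, so $D(w)=L+2\rho-2\ell-|k|$. Applying it to $t^r w t^{-r}$ — whose leftmost lit lamp is $\ell+r$, rightmost is $\rho+r$, number of lit lamps is $L$, and lamplighter position is still $k$ — gives $D(t^r w t^{-r})=L+2(\rho+r)-2(\ell+r)-|k|=L+2\rho-2\ell-|k|=D(w)$, the occurrences of $r$ cancelling.

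The one thing that needs checking, and which I expect to be the main (if routine) obstacle, is that the reorganized formula really does apply to $t^r w t^{-r}$, i.e., that $\ell+r<0<\rho+r$ and $\ell+r\le k\le\rho+r$. All of this follows from the hypothesis $|k|+|r|<m$ by the triangle inequality: $\ell+r\le -m+r<0$ since $r\le|r|<m$; $\rho+r\ge m+r>0$ since $-r\le|r|<m$; $\ell+r\le -m+r\le k$ since $r-k\le|r|+|k|<m$; and $k\le m+r\le\rho+r$ since $k-r\le|k|+|r|<m$. The role of the hypothesis is precisely to prevent the translated configuration from having all its lit lamps on one side of the origin, or from leaving the lamplighter outside the interval spanned by its lit lamps — either of which would change the applicable branch of the length formula.
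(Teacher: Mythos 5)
Your proof is correct and takes essentially the same route as the paper: conjugation by $t^r$ translates the lit-lamp configuration while fixing the lamplighter's position, and substituting into Taback's word-length formula makes the occurrences of $r$ cancel. If anything you are more careful than the paper's own argument, which only discusses $k,r>0$ and implicitly treats $\pm m$ as the extremal lit lamps, whereas your reduction to $D=L+2\rho-2\ell-|p|$ together with the verification that the translated configuration still satisfies $\ell+r<0<\rho+r$ and $\ell+r\le k\le\rho+r$ covers all signs and arbitrary extremal lamps.
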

\begin{proof}
	The proof is similar to the previous lemma.
	The word length formula relies on 4 quantities: number of lamps lit, left most lit lamp, right most lit lamp, position of lamplighter. 
	In the case $k>0, r>0$, these quantities are given by $l_w, -m+r, m+r, k$ where $l_w$ is the number of lamps lit for the element $w$. 
	Substituting these into the word length formula and using the fact that $m>k+r$ we arrive at the desired conclusion. 
\end{proof}

We are now ready to prove the main theorem of this section. 

\begin{theorem}\label{thm:lllargeradii}
	Let $r\in \N$. Let $k>0$. If $m>r+k$, then $\kappa^r(d_mt^k)>0$. 
\end{theorem}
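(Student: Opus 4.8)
The plan is to compute the spherical comparison distance $\mathcal{S}_r(d_mt^k)$ explicitly and show it is strictly less than $D(d_mt^k)$. Writing $g = d_mt^k$, we must evaluate $\frac{1}{|S_r|}\sum_{w \in S_r} D(w^{-1}gw)$ and compare it to $D(g) = 6m-k+1$. The key structural point, exactly as in the radius-$1$ and radius-$2$ cases, is that each $w^{-1}gw$ is again a lamplighter configuration obtained from $g$'s configuration by applying the lamp-changes coming from $w$ at the origin, then translating, then applying them again at the shifted position; since $m > r+k$, the leftmost and rightmost lit lamps of all these conjugates stay at predictable positions, so \cref{llconjgen} (and its proof technique) controls the bulk of the contributions.

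First I would classify the elements $w \in S_r$ by how many $a$-letters (lamp toggles) they contain in a geodesic spelling: write $w$ as a word of length $r$ in $\{a^{\pm}, t^{\pm}\}$. For $w$ a pure power $t^{\pm r}$ there are no lamp toggles, and \cref{llconjgen} gives $D(w^{-1}gw) = D(g)$ — these are the ``neutral'' terms. For $w$ that do contain toggles, conjugation by $w$ toggles the lamp at the lamplighter's starting position and at its ending position (after the net translation), and possibly intermediate lamps, but since $g$'s configuration has all of $[-m,m]$ lit and $m > r+k$, every lamp toggled by $w$ lies strictly inside the lit interval, so each toggle turns a lit lamp off. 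The net effect is that $D(w^{-1}gw) = D(g) - 2(\text{number of distinct lamps toggled by } w) + (\text{correction})$; I would show that for every $w \in S_r$ with at least one toggle, $D(w^{-1}gw) \le D(g) - 1$, and that at least one such $w$ exists (e.g. $w = at^{r-1}$ type elements), with a strict inequality there.

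Second I would assemble the average: since the neutral terms contribute exactly $D(g)$ and every other term contributes at most $D(g)$ with at least one term contributing strictly less, we get $\mathcal{S}_r(g) < D(g)$, hence $\kappa^r(d_mt^k) = \frac{D(g) - \mathcal{S}_r(g)}{D(g)} > 0$. To make this rigorous one only needs the \emph{inequality} $D(w^{-1}gw) \le D(g)$ for all $w$ together with one strict instance, which sidesteps having to enumerate all $w$ precisely — a considerable simplification over the explicit radius-$2$ computation.

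The main obstacle is verifying the uniform bound $D(w^{-1}gw) \le D(g)$ for \emph{all} $w \in S_r$, including those containing several $t^{\pm}$ and $a$ letters in awkward orders: one must check that conjugation never lengthens $g$, i.e. that no choice of $w$ causes the lit-lamp interval to grow beyond what the hypothesis $m > r+k$ controls, and that toggling lamps (rather than the lamplighter wandering) cannot accidentally re-light a lamp in a way that increases length. The cleanest route is to prove a slightly more general lemma: \emph{for any $w$ with $|w| = r$, if $v$ is a lamplighter configuration with $[-m,m]$ all lit, lamplighter at $k$, and $m > |k| + r$, then $D(w^{-1}vw) \le D(v)$}, proved by tracking the four quantities (lamp count, leftmost lit lamp, rightmost lit lamp, lamplighter position) through the geodesic for $w$ and applying Taback's formula. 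Once that lemma is in hand, the theorem follows immediately from the averaging argument above.
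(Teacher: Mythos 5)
Your proposal is correct and takes essentially the same route as the paper: split $S_r$ into the pure powers $t^{\pm r}$ (length preserved via \cref{llconjtr,llconjgen}) and elements containing lamp toggles, note that since $m>r+k$ all toggles land strictly inside the fully lit interval so conjugation cannot increase the length (with the trailing $t^p$ stripped off by \cref{llconjgen}), and then average. The only real difference is that the paper proves a \emph{strict} decrease for every toggling element (by observing the extremal toggled positions, e.g.\ $i_1-k$ and $i_l$, are hit exactly once), whereas you settle for weak decrease everywhere plus one explicit strict instance such as $w=at^{r-1}$ --- a harmless simplification that also sidesteps the coincident-toggle issue making your phrase ``each toggle turns a lit lamp off'' slightly imprecise.
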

\begin{proof}
	We have already seen in Lemma \ref{llconjtr} that conjugating by $t^r$ leaves word length invariant. 

	We now consider a general element $g$ of the lamplighter group such that $D(g)= r$. 
	We can assume that $g$ switches at least one lamp. 
	Since $D(g)=r$, we know that $g$ cannot switch any lamps outside $[-r, r]$.
	Suppose that $g$ switches lamps at positions $i_1, \dots, i_l$ and ends at position $p$. 
	Then $g = a_{i_1}\dots a_{i_l}t^p$. 
	We have already noted that $l<r$. 

	Now consider $$g^{-1}d_mt^kg = t^{-p}a_{i_l}\dots a_{i_1}d_mt^ka_{i_1}\dots a_{i_l}t^p.$$ 
	We will show that $D(a_{i_l}\dots a_{i_1}d_mt^ka_{i_1}\dots a_{i_l})< D(d_mt^k)$.
	The element $$h = a_{i_l}\dots a_{i_1}d_mt^ka_{i_1}\dots a_{i_l}$$ is $d_mt^k$ except we have switched the lamps at position $a_{i_1}, \dots, a_{i_l}$ and $a_{i_1-k}, \dots, a_{i_l-k}.$
	This may result in some being switched twice however the lamps at position $i_1-k$ and $i_l$ are both switched off. 
	Since $m>k+r$, we see that the element $h$ corresponds to a configuration where the lamps $-m, m$ are both lit and the lamp lighter is at position $k$. 
	Since we still have the same extremal lamps lit and the same position of the lamplighter we have shortened the length since we are lighting less lamps.
	I.e. $D(h) < D(d_mt^k)$. 

	Since we have not changed the state of the lamps at position $-m, m$, we can appeal to Lemma \ref{llconjgen} to compute $D(g^{-1}d_mt^kg) = D(t^{-p}ht^p) = D(h) < D(d_mt^k)$. 
	Thus we have shown that conjugation by elements in the sphere of radius $r$ either shorten $d_mt^k$ or leave the length unchanged. 
	We conclude that $\kappa^r(d_mt^k)>0$. 
\end{proof}

Since the above proof works for $r<m-k$, we see that the element $d_mt^k$ has positive curvature when we consider using balls instead of spheres.

We end this section with an elementary yet interesting observation about $\lamptwo$:
\begin{proposition}
Let $w$ be a word in $\lamptwo$.
Then there exists an $M\in \mathbb{N}$ such that for all $m>M$, $w$ is a subword in $d_m$.
\end{proposition}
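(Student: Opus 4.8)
The plan is to produce, for a given $w$, a word representing $w$ that is a subword --- read in the scattered sense, i.e.\ as a subsequence --- of a geodesic spelling of $d_m$ for every sufficiently large $m$ (the contiguous version of the statement already fails for very simple $w$, such as a configuration lighting just two far-apart lamps, so this is the reading that makes the proposition correct).

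First I would record two convenient geodesic spellings of $d_m$. Since the generators $a_i=t^{-i}at^i$ commute in pairs and are involutions, listing them in increasing, respectively decreasing, order of index both produce $d_m$, and after free reduction these read
\[
d_m \;=\; a_{-m}a_{-m+1}\cdots a_{m} \;=\; t^{m}\,(a t^{-1})^{2m}\,a\,t^{m},
\]
\[
d_m \;=\; a_{m}a_{m-1}\cdots a_{-m} \;=\; t^{-m}\,(a t)^{2m}\,a\,t^{-m}.
\]
Each of these words has length $6m+1=|d_m|$, hence is geodesic. The relevant feature is its shape: a block of $t$'s of a single sign, then a long block alternating $a$ with a single $t$ of a fixed sign, then a second block of $t$'s of that same sign.

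Next I would check that every $w\in\lamptwo$ has a word of one of the two matching shapes: $t^{p_1}u\,t^{p_2}$ with $p_1,p_2\ge 0$ and $u$ a word over $\{a,t^{-1}\}$, or $t^{-q_1}v\,t^{-q_2}$ with $q_1,q_2\ge 0$ and $v$ a word over $\{a,t\}$. Describe $w$ in the lamplighter picture by its finite lit set $S\subseteq[L,R]$ together with the terminal lamplighter position $p$. If $p\ge L$: walk the lamplighter from the origin out to $\max(R,0)$, sweep it leftward to $\min(L,0)$ toggling exactly the lamps of $S$ it passes, then walk to $p$; the two padding walks use only the letter $t$, respectively only $t^{-1}$, so this is a word of the first shape. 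If $p<L$, then in particular $p\le R$, and the mirror journey (out to $\min(L,0)$, sweep rightward, walk to $p$) is a word of the second shape. Since one always has $p\ge L$ or $p\le R$, one of the two is available; the degenerate case $S=\emptyset$, that is $w=t^p$, is immediate.

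Finally, given $w=t^{p_1}u\,t^{p_2}$ as above, let $\alpha$ and $\beta$ count the occurrences of $a$ and of $t^{-1}$ in $u$, and put $M=p_1+p_2+\alpha+\beta$. For any $m>M$ one reads $w$ off the spelling $t^{m}(at^{-1})^{2m}a\,t^{m}$ of $d_m$: take $t^{p_1}$ from the leading block $t^m$; embed $u$ into the alternating block $(at^{-1})^{2m}a$, which works because that block alternates the two letters and holds far more than $\alpha+\beta$ copies of each, so every interleaving of $\alpha$ letters $a$ and $\beta$ letters $t^{-1}$ occurs in it as a subsequence; and take $t^{p_2}$ from the trailing block $t^m$. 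These three occurrences appear in the correct order, so their concatenation --- which is $w$ --- is a subword of $d_m$. For $w$ of the second shape, repeat verbatim with the spelling $t^{-m}(at)^{2m}a\,t^{-m}$. Everything here is routine; the one step that needs a moment's thought is the dichotomy ``$p\ge L$ or $p\le R$,'' which decides whether to traverse $d_m$ from its left end or from its right end, i.e.\ which of the two geodesic spellings to use, and this is the step I would expect to be the main (minor) obstacle.
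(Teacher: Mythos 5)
Your proof is correct, but it takes a genuinely different route from the paper's. The paper argues at the level of the Taback normal form $w=a_{i_1}\dots a_{i_l}a_{j_1}\dots a_{j_k}t^s$ and \emph{completes} $w$ to a dead end element by right multiplication: it absorbs the tail $t^s$ by multiplying by $at^{-s}$ (or, when the lamplighter ends on a lit lamp, by $t^{o}at^{-(o+s)}$ for a short detour $o$ to an unlit lamp), then fills in the remaining unlit positions of $[-M,M]$ by further $a_q$'s, and uses commutativity of the $a_i$ to rearrange the result into a geodesic spelling of $d_M$; the precise sense of ``subword'' is left informal there, the content being that the normal form of $w$ can be completed to that of $d_m$ for all large $m$. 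You instead fix a precise scattered-subword interpretation, write down the explicit geodesic spellings $t^{m}(at^{-1})^{2m}at^{m}$ and $t^{-m}(at)^{2m}at^{-m}$ of $d_m$, produce a spelling of $w$ of the matching shape $t^{p_1}u\,t^{p_2}$ (or its mirror) via a lamplighter journey, and embed it letter by letter as a subsequence, with the explicit bound $M=p_1+p_2+\alpha+\beta$; your dichotomy $p\geq L$ or $p\leq R$ plays the role that the choice of normal-form ordering (lowest-to-highest versus highest-to-lowest index) plays in the paper. Your version buys precision: it pins down a reading under which the statement is literally true (and you are right that the contiguous reading fails, e.g.\ for an element lighting two distant lamps with the lamplighter at the origin), and it gives an effective $M$, while the exceptional case the paper must treat separately (final position on a lit lamp) never arises in your construction. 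The paper's version buys brevity and the ``$w$ extends to a dead end word'' picture, which is the form in which the observation is meant to be used.
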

\begin{proof}
Let $w=a_{i_1}a_{i_2}\dots a_{i_l}a_{j_1}\dots a_{j_k}t^s$.
Notice that each of the $a_{x_y}$ commute with each other.
Suppose $s\neq x_y$ for any $x_y\in \{i_1,i_2,\dots ,i_l,j_1,\dots ,j_k\}$.
Then $wat^{-s}=a_{i_1}a_{i_2}\dots a_{i_l}a_{j_1}\dots a_{j_k}a_s$.
Let $M=$max$\{|i_l|,|j_k|,|s|\}$.
Then we can multiply $wat^{-s}$ by $a_q$ for all $q\in [-M,M]\setminus \{i_1,\dots ,i_l,j_1,\dots ,j_k,s\}$ to get the dead end word $d_M$, and by commutativity of the $a_{x_y}$'s we can arrange them into a geodesic spelling of $d_M$.
From here it is obvious that we can produce higher length dead end words $d_m$.
If $s= x_y$ for some $x_y\in \{i_1,i_2,\dots ,i_l,j_1,\dots ,j_k\}$, then we multiply $w$ by $t^{o}at^{-(o+s)}$, where $o$ is the minimum number of steps from $s$ to an unlit lamp.
From here we repeat the above process to produce $d_M$.
\end{proof}

\section{Positively curved elements in Houghton's Group}

In this section, we demonstrate elements of Houghton's Group $\mathcal{H}_2$ that exhibit positive curvature.
Similar to the $\lamptwo$ case, these elements are ``back-track'' elements from dead end words in $\mathcal{H}_2$.
Let $*^n$ be the disjoint union of $n$ copies of $\mathbb{N}$.
Houghton's group $\mathcal{H}_n$ can be understood as a group of permutations $*^n$ such that the permutations act as translations outside of a finite subset of $*^n$.

We will consider $\mathcal{H}_2$, which in the style of \cite{lehnert} we will view via the following geometric interpretation: 
Consider a biinfinite string of beads, indexed by nonzero integers, and a ``lamplighter'' who stands between two beads.
The identity word represents all beads in their original order and the lamplighter standing between $-1$ and $1$.
The generator $s$ moves the lamplighter one position to the right, and $\sigma$ transposes the beads to the immediate left and right of the lamplighter.
We can also view this group as $S_{\infty}\rtimes\Z$. 
Here $S_{\infty}$ is the set of permutations with finite support and $\Z$ acts by translation. 
One should make the appropriate adjustment for the fact that $0$ is not in the set we are permuting.
See \cite{lehnert} for further discussion as well as an explicit presentation for $\mathcal{H}_2$.

In \cite{lehnert}, a set of dead end elements $g_k$ is shown to exist.
The element $g_k$ transposes the beads labeled $l$ and $-l$ for $1 \leq l \leq k$ and returns the lamplighter to the origin.
We will consdier the elements $h_{k, m}$ which is the element that transposes the beads labelled $l$ and $-l$ for $m\leq l\leq k$ and returns the lamplighter to the origin. 

To show that $g_k$ is a dead end element, the following geodesic spelling was defined: Let $u_l=s^{-(l-1)}(\sigma s)^{2(l-1)}(\sigma s^{-1})^{2(l-1)}\sigma s^{l-1}$.
The four terms in this spelling can be viewed as: travelling to $-l$ ($s^{-(l-1)}$), making swaps and moving right until we arrive at $l$ ($(\sigma s)^{2(l-1)}$), then making swaps and moving left until we arrive at $-l$ again ($(\sigma s^{-1})^{2(l-1)}$) and finally returning to the origin $(s^{l-1}$). 
One should note that moving left first was an arbitrary choice and a geodesic spelling can also be obtained by moving right first. 
This spelling reverses the four terms in the previous spelling. 
We will refer to these spellings as negfirst and posfirst and denote them $u_l^-$ and $u_l^+$ if needed. 
We will use $u_l$ when it does not matter which spelling is used. 

We can obtain a geodesic word for $h_{k, m}$ by concatenating the $u_l$ namely, $$h_{k, m} = u_k\dots u_m = u_m\dots u_k.$$ 
Where one uses either all posfirst or all negfirst. 
Thus, we have 4 possible geodesic spellings for $h_{k, m}$. 

\begin{proposition}
	For any $1<k\leq m$, we have $\kappa(h_{k, m}) > 0$. 
\end{proposition}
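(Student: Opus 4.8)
The plan is to follow the template of the $\lamptwo$ argument. With the standard symmetric generating set $S=\{s,s^{-1},\sigma\}$ one has $\GenCon(h_{k,m})=\tfrac13\bigl(|s^{-1}h_{k,m}s|+|sh_{k,m}s^{-1}|+|\sigma h_{k,m}\sigma|\bigr)$, so it suffices to bound this average strictly below $|h_{k,m}|$. First I would record $|h_{k,m}|$ from the geodesic spelling $h_{k,m}=u_m u_{m+1}\cdots u_k$: each block $u_l$ has $10(l-1)+1$ letters, and on concatenation the only simplifications are the $s$-power cancellations at the junctions of consecutive blocks, which determine $|h_{k,m}|$ (a quadratic in $k$ and $m$), although in the end only the comparisons below are needed. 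Next, for the $\sigma$-conjugate, realize $h_{k,m}\in S_\infty\rtimes\Z$ as the finitely supported permutation $\prod_{l=m}^k (l,-l)$ with trivial translation part and $\sigma$ as the transposition $(-1,1)$: since $m>1$, the support of $h_{k,m}$ misses $\pm1$, so $\sigma$ commutes with $h_{k,m}$ and $|\sigma h_{k,m}\sigma|=|h_{k,m}|$. (This is precisely why the dead-end words $g_k$ themselves have zero curvature, and why one must pass to the back-track words.)

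The heart of the argument is the estimate $|s^{\pm1}h_{k,m}s^{\mp1}|\le|h_{k,m}|-2$. It rests on the telescoping identity $sh_{k,m}s^{-1}=\prod_{l=m}^k\bigl(su_l s^{-1}\bigr)$ together with the observation that conjugating the far transposition $(l,-l)$ by the shift produces $(l+1,-(l-1))$, whose nearer endpoint $-(l-1)$ is one step closer to the origin; concretely, $su_l s^{-1}$ is spelled exactly like $u_l$ but with both travel legs shortened from $s^{-(l-1)}\cdots s^{l-1}$ to $s^{-(l-2)}\cdots s^{l-2}$, two letters fewer. Concatenating these shortened blocks and re-tallying the junction cancellations (which now remove $2(l-2)$ rather than $2(l-1)$ letters) produces a word for $sh_{k,m}s^{-1}$ of length $|h_{k,m}|-2$; the bound for $s^{-1}h_{k,m}s$ follows by the mirror computation, or simply by the left--right reflection $x\leftrightarrow -x$, $s\leftrightarrow s^{-1}$, $\sigma\leftrightarrow\sigma$, under which $h_{k,m}$ is fixed. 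Here $m>1$ is again exactly what keeps every index appearing in these shifted transpositions clear of the missing bead $0$, so the elementary length bookkeeping for $H_2$ goes through with no special cases.

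Combining the three estimates, $\GenCon(h_{k,m})\le\tfrac13\bigl((|h_{k,m}|-2)+(|h_{k,m}|-2)+|h_{k,m}|\bigr)=|h_{k,m}|-\tfrac43$, hence $\kappa(h_{k,m})\ge\tfrac{4}{3\,|h_{k,m}|}>0$. The step I expect to require the most care is the middle one: verifying that the shortened blocks $su_l s^{-1}$ really glue into a correctly reduced word and that the claimed junction cancellations are genuinely available. This is where the availability of both the posfirst and the negfirst spelling of each $u_l$ --- hence all four geodesic spellings of $h_{k,m}$ --- gets used, and it is the only point at which one has to watch the $0$-skip in the bead indexing; the remaining arithmetic with the $H_2$ word-length data is routine.
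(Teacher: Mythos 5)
Your proposal is correct and follows essentially the same route as the paper: $\sigma$ commutes with $h_{k,m}$ because their supports are disjoint, while conjugation by $s^{\pm 1}$ strictly shortens, and averaging over $\{s,s^{-1},\sigma\}$ gives $\kappa(h_{k,m})>0$. Your block-by-block computation of $s u_l s^{-1}$ with the re-tallied junction cancellations is just a more granular bookkeeping of the paper's one-line observation that the posfirst (resp.\ negfirst) geodesic spelling of $h_{k,m}$ begins and ends with $s$-letters that cancel against the conjugating letters, yielding a spelling of the conjugate of length $|h_{k,m}|-2$.
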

\begin{proof}
	Since $h_{k, m}$ is a permutation of $\Z\smallsetminus\{0\}$ and has disjoint support from $\sigma$, we see that these two elements commute. 
	Thus conjugation by $\sigma$ does not change word length. 

	Now let us consider conjugation by $s$. 
	By using the posfirst spelling, the geodesic representative for $h_{k, m}$ starts with an $s$ and ends with an $s^{-1}$. 
	Thus we can see that $s^{-1}h_{k,m}s$ cancels out the first and last $s$ and thus has shorter length. 
	Similarly using the negfirst spelling for $s^{-1}$ we arrive at the same conclusion. 
	Thus we have two elements that shorten length and one that leaves it invariant.
	Thus $\kappa(h_{k, m})>0$. 
\end{proof}

In fact, these elements have positive curvature for arbitrary radius.
To see this we have the following elementary lemma.

\begin{lemma}\label{lem:shwshc}
	Let $g$ be an element of $H_2$. If $g$ moves the element $r\in\Z\smallsetminus\{0\}$, then $|g|\geq r$.
\end{lemma}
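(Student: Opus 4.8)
The plan is to read a shortest word for $g$ from left to right and keep track of how far the lamplighter has wandered, using the fact that a single letter can move beads only very locally.

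First I would fix a geodesic word $w = x_1 x_2 \cdots x_n$ for $g$, so that $n = |g|$ and each $x_i \in \{s, s^{-1}, \sigma\}$ (recall $\sigma$ is an involution, so these are all the generators together with their inverses). For $0 \le i \le n$ let $p_i$ be the position of the lamplighter after reading $x_1 \cdots x_i$, with $p_0 = 0$. Since each $s^{\pm 1}$ shifts the lamplighter by one while $\sigma$ fixes it, one has $|p_i - p_{i-1}| \le 1$ and hence $|p_i| \le i$ for all $i$. I would also record the elementary observation that the only letter moving any bead is $\sigma$, and that a $\sigma$ read while the lamplighter is at position $p$ transposes only the two beads occupying the slots immediately to the left and right of $p$.

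Next, assuming $g$ moves the bead $r$ with $r > 0$ (the case $r < 0$ is immediate, since then $|g| \ge 0 > r$), I would look at the bead originally in slot $r$. Because $g$ does not return it to slot $r$, there is a first index $i$ at which it is moved; then necessarily $x_i = \sigma$, and just before this step the bead still sits in slot $r$ with the lamplighter at some position $p_{i-1}$. For the transposition performed at step $i$ to disturb slot $r$, the lamplighter must be at position $r-1$ or at position $r$---these are the only two positions adjacent to slot $r$ once the missing slot $0$ is accounted for---so $|p_{i-1}| \ge r - 1$. Combining this with $|p_{i-1}| \le i - 1$ yields $i \ge r$, and therefore $|g| = n \ge i \ge r$.

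The whole argument is just bookkeeping, so I do not expect a genuine obstacle; the one place that needs a moment's care is pinning down exactly which two slots a single $\sigma$ affects from a given lamplighter position, in particular getting the indexing right near the gap at $0$. That is a finite check against the conventions fixed above, and it is precisely what makes the bound come out as $|g| \ge r$ rather than off by one---sharpness being witnessed already by $s^{r-1}\sigma$, which moves bead $r$ and has length $r$.
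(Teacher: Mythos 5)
Your proof is correct and follows essentially the same route as the paper: the lamplighter must come adjacent to slot $r$, which forces at least $r-1$ letters of displacement (the paper counts the $r-1$ necessary instances of $s$, you track positions via $|p_i|\le i$), plus one $\sigma$, giving $|g|\ge r$. The step-by-step bookkeeping and the sharpness example $s^{r-1}\sigma$ are fine refinements but not a different argument.
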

\begin{proof}
	Suppose that $r>0$.
	To permute $r$ to another position, the ``lamplighter'' must first move to position $r-1$ which requires $r-1$ instances of $s$. 
	We must also have at least one instance of $\sigma$. 
	Thus $|g|\geq r$. 
\end{proof}

\begin{theorem}\label{houghton}
	For $1<k\leq m$ and $r<k$, we have $\kappa^r(h_{k, m})>0.$
\end{theorem}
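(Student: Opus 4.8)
The plan is to mimic the radius-$1$ argument for $\kappa(h_{k,m})>0$, but now conjugating by an arbitrary element $g$ of length exactly $r<k$ rather than by a single generator. The two facts we want to establish for each such $g$ are: (1) conjugating $h_{k,m}$ by $g$ never increases word length, and (2) at least one such $g$ strictly decreases it. Summing over the sphere $\mathcal{S}_r$ then gives $\mathcal{S}_r(h_{k,m}) < |h_{k,m}|$, hence $\kappa^r(h_{k,m})>0$.

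For step (1), the key observation — supplied by \cref{lem:shwshc} — is that an element $g$ with $|g|=r$ cannot move any bead labelled $\pm l$ with $l > r$. Since $r < k \leq l$ for every transposition $(l,-l)$ appearing in $h_{k,m}$, the conjugate $g^{-1}h_{k,m}g$ still transposes the beads $l$ and $-l$ for each $m \le l \le k$ on the ``outside'' (indices of absolute value $> r$), and only the behaviour on the finitely many beads with $|{\cdot}| \le r$ can differ. Writing $g = \pi \cdot s^p$ with $\pi \in S_\infty$ supported in $[-r+1, r-1]$ (using the $S_\infty \rtimes \Z$ description), one computes that $g^{-1}h_{k,m}g$ is again of the form (permutation supported near the origin)$\cdot$(the same outer transpositions)$\cdot$(lamplighter back at origin), and the geodesic length formula for $H_2$ — which I would quote or briefly re-derive in the spirit of the $u_l$ spellings — depends on the outer transpositions in a fixed way plus a nonnegative contribution from the inner permutation. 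Hence $|g^{-1}h_{k,m}g| \le |h_{k,m}|$, with the ``extremal'' structure (the outermost transposed pair at $\pm k$ and the lamplighter at $0$) unchanged, exactly as in \cref{llconjgen} for the lamplighter group.

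For step (2), I would exhibit a concrete shortener. The cleanest choice is $g = s^{r}$ (or $s^{-r}$): as in the radius-$1$ proof, choosing the posfirst geodesic $h_{k,m} = u_k^+ \cdots u_m^+$, which begins with $s$ and ends with $s^{-1}$, iterating shows $s^{-r} h_{k,m} s^r$ cancels the first $r$ letters $s^{\,r}$ against... wait — more carefully, the leading block of the posfirst spelling of $u_k^+$ is $s^{k-1}$ and $k-1 \ge r$, so conjugation by $s^{-r}$ peels off $s^r$ from the front; symmetrically the trailing block $s^{-(k-1)}$ absorbs the conjugating $s^r$ on the right, giving a word that is at least $2r$ letters shorter. (Using the negfirst spelling handles $g=s^{-r}$.) So at least two points on the sphere strictly shorten the length, while all others leave it unchanged, giving a strictly positive average gap.

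The main obstacle I anticipate is step (1) in full rigour: controlling the word length of $g^{-1}h_{k,m}g$ when $g$ is a genuinely mixed element (a nontrivial permutation near the origin composed with a translation), rather than a pure power of $s$ or a pure $\sigma$. One must verify that the inner permutation introduced by conjugation can only add to the length, and that it cannot somehow interact with the outer transpositions to force a longer geodesic — intuitively clear because the supports are separated by the bound from \cref{lem:shwshc}, but it requires either an explicit geodesic-length formula for $H_2$ analogous to Taback's formula for $\lamptwo$, or a careful ad hoc argument building a spelling of $g^{-1}h_{k,m}g$ of the required length from a spelling of $g$ and the $u_l$ spellings of $h_{k,m}$. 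I would isolate this as a lemma (the $H_2$ analogue of \cref{llconjgen}) before assembling the theorem.
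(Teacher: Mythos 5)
Your overall skeleton (no conjugate gets longer, and at least one — $s^{\pm r}$ — gets strictly shorter, via cancellation against the posfirst/negfirst spellings) is sound, and your step (2) is exactly the mechanism the paper uses. But step (1), which you yourself flag as the obstacle, contains a genuine gap, and moreover your description of the conjugate is off. Write $g=\tau s^{i}$ in the $S_\infty\rtimes\Z$ decomposition. By \cref{lem:shwshc}, since $|g|=r<k$, the permutation part $\tau$ cannot move any bead of label $\geq k$, so its support is disjoint from the support of $h_{k,m}$; hence $\tau$ \emph{commutes} with $h_{k,m}$ and cancels completely in the conjugation: $g^{-1}h_{k,m}g=s^{-i}\tau^{-1}h_{k,m}\tau s^{i}=s^{-i}h_{k,m}s^{i}$. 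There is no residual ``inner permutation supported near the origin,'' and the outer transpositions are not ``the same'' — they are shifted by $i$. Because of this, the route you propose for step (1) (an explicit length formula for $H_2$ in which the outer transpositions contribute ``in a fixed way'' plus a nonnegative inner term) is both unavailable in the paper and aimed at the wrong object; the shifted element $s^{-i}h_{k,m}s^{i}$ does not have the same length as $h_{k,m}$ when $i\neq 0$ — it is strictly shorter.

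The fix is short and is what the paper does: split into two cases. If $i=0$, then $g=\tau\in S_\infty$ and $g^{-1}h_{k,m}g=h_{k,m}$, so the length is unchanged. If $i\neq 0$, then $|i|\leq r<k$, and the conjugate equals $s^{-i}h_{k,m}s^{i}$, whose length is bounded above by cancelling $|i|$ letters from each end of the posfirst (for $i>0$) or negfirst (for $i<0$) geodesic spelling — precisely your step (2) argument, now applied to every element of the sphere with nonzero shift part, not just $s^{\pm r}$. So every point of $S_r$ either fixes the length or strictly shortens it (by at least $2|i|$), and since $s^{r}\in S_r$ strictly shortens, the average is strictly below $|h_{k,m}|$ and $\kappa^r(h_{k,m})>0$. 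In short: the missing idea is that \cref{lem:shwshc} gives disjointness of supports, hence commutation, which reduces conjugation by an arbitrary sphere element to conjugation by a pure power of $s$; no $H_2$ length formula is needed.
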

\begin{proof}
	We will show that conjugating by an element in the sphere of radius $r$ either leaves the length of $h_{k, m}$ invariant or shortens it. 
	
	In the semi-direct product structure, we can see that $h_{k, m}$ is an element of $S_{\infty}$.
	Let $g$ be an element in the sphere of radius $r$. 
	If $g$ is an element of $S_{\infty}$ we can see from Lemma \ref{lem:shwshc} that the support of $g$ is disjoint from the support of $h_{k, m}$. 
	Thus $g^{-1}h_{k, m}g = h_{k, m}$ and the length is unchanged. 

	If $g\notin S_{\infty}$, then we can write it in the form $\tau s^i$ for some $i\neq 0$. 
	Since $|g| = r$, we deduce that $i\leq r$. 
	Lemma \ref{lem:shwshc} shows that the support of $\tau$ and the support of $h_{k, m}$ are disjoint. 
	Thus, we obtian $|g^{-1}h_{k, m}g| = |s^{-i}h_{k, m}s^i|$. 
	
	We will examine the case $i>0$. 
	By spelling $h_{k, m}$ posfirst, we see that the spelling starts with $m$ instances of $s$ and ends with $k$ instances of $s^{-1}$. 
	Since $i\leq r\leq k$ we see that this conjugation cancels both the first $i$ letters and the last $i$ letters of $h_{k, m}$. 
	Thus we have reduced the length of the word by at least $2i$. 

	We have seen that every element in the sphere of radius $r$ leaves the element unchanged or shortens its length. 
	Thus we conclude that $\kappa^r(h_{k, m})>0$. 
\end{proof}

The dead end elements in both $\lamptwo$ and $\mathcal{H}_2$ can be informally understood as elements that ``highly mix'' the original configurations of the street/string of beads.
In $\lamptwo$, this looks like turning on all lamps in a symmetric interval centered at 0.
In $\mathcal{H}_2$ this looks like moving each bead in a symmetric interval centered at zero to it's opposite sign index.
Multiplying by certain small words changes the configuration in the symmetric interval, and since the dead end elements highly mix the original configuration this inevitably results in a configuration that is closer to the original than before.
This can be understood in analogy with highest height elements in finite Coxeter groups.
The positive curvature appears in much the same way, where as long as conjugating the ``backtrack'' element stays within the highly mixed interval, we would expect the conjugated word to have length less than the original word.

\section{Increasing the spherical comparison radius for the Heisenberg group}

Let $H(\integers)$ denote the discrete Heisenberg Group.
In \cite{medscale} it was shown that $H(\integers)$ has a positive density of elements with all three signs of $\kappa_1^\mathcal{S}$.
In this section we generalize this result to show positive density of all three signs with respect to arbitrary comparison radius.

$H(\integers)$ has a standard presentation $\big < a,b :[a,b]$ is central $\big >$.
In the style of \cite{medscale}, we use Mal'cev coordinates to spell words in $H(\integers)$: Every group element can be uniquely written in the form $a^Ab^Bc^C$ where $A,B,C\in \integers$ and $c=[a,b]$.
The word metric obeys the following formula: If $A>B>0$ and $C>0$ we have 
$$|a^Ab^Bc^C|=\bigg \{ \begin{matrix} 2\lceil C/A \rceil +A +B, & C\leq A^2-AB \\ 2\lceil 2 \sqrt{C+AB}\rceil -A - B, & C\geq A^2-AB \end{matrix}$$ 
We call the first case $(C\leq A^2-AB)$ the low height case and the second $(C\geq A^2-AB)$ the high height case.
In \cite{medscale} it was shown that $H(\integers)$ with the standard generating set has a positive proportion of points with $\kappa(g)>0, \kappa(g)<0$ and $\kappa(g)=0$.
We expand on this showing that the result holds true for arbitrarily large spherical comparison radius.

\begin{theorem}\label{heisenberg}
	There is a positive density of elements $g\in H(\Z)$ satisfying $\kappa_r^X(g)>0, \kappa_r^S(g) = 0$ and $\kappa_r^S(g)<0$ for arbitrary $r$. 
\end{theorem}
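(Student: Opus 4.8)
The plan is to push everything through the observation that in $H(\Z)$ conjugation only moves the last Mal'cev coordinate. First I would record the basic computational fact: if $w\in H(\Z)$ has image $(\alpha,\beta)$ in the abelianization $H(\Z)/\langle c\rangle\cong\Z^2$, then for $g=a^Ab^Bc^C$ one has
\[
w^{-1}g\,w \;=\; g\,[g,w] \;=\; a^Ab^Bc^{\,C+A\beta-B\alpha},
\]
since the commutator $[g,w]=c^{A\beta-B\alpha}$ depends only on the images of $g$ and $w$. Hence, writing $\delta(w):=A\beta(w)-B\alpha(w)$,
\[
\mathcal{S}_r(g)=\frac{1}{|S_r|}\sum_{w\in S_r}\bigl|a^Ab^Bc^{\,C+\delta(w)}\bigr|,
\]
and the multiset $\{\delta(w):w\in S_r\}$ is symmetric under $\delta\mapsto-\delta$ (pair $w$ with $w^{-1}$) and bounded by $|\delta(w)|\le(|A|+|B|)r$. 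So $\kappa_r^{\mathcal S}(g)$ is governed entirely by the response of the one-variable function $C'\mapsto|a^Ab^Bc^{C'}|$ to a bounded symmetric perturbation of $C$.

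Next I would fix a small constant $\epsilon=\epsilon(r)>0$ (with $\epsilon r$ below an absolute threshold) and work with the large-radius analogue of the high-height elements: $g=a^Ab^Bc^C$ with $A>B>0$, $\gcd(A,B)=1$, $A+B\le\epsilon n$, and $C+AB$ confined to a sub-range of the high-height regime such as $C+AB\in[n^2/100,\ n^2/20]$. For such $g$ one has $|g|\le n$, and since $C+AB$ dwarfs both $A^2$ and $D:=(A+B)r$, every conjugate $w^{-1}gw$ is again high-height, so $|w^{-1}gw|=2\lceil 2\sqrt{C+AB+\delta(w)}\rceil-A-B$. The whole problem is now about the step function $x\mapsto\lceil 2\sqrt x\rceil$, whose level sets $R_k=\{x:\lceil 2\sqrt x\rceil=k\}=[b_k,b_{k+1})$ are intervals of length $\asymp k$, which for $x$ in our range is $\asymp n$ and in particular larger than $2D$ once $\epsilon$ is small relative to $r$.

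Then I would split into three sub-families according to where $C+AB$ sits inside its interval $R_k$. (i) If $C+AB$ is at distance more than $D$ from both $b_k$ and $b_{k+1}$, then every $\delta(w)$ leaves $\lceil 2\sqrt{C+AB+\delta(w)}\rceil$ equal to $k$; hence $|w^{-1}gw|=|g|$ for all $w\in S_r$ and $\kappa_r^{\mathcal S}(g)=0$. (ii) If $C+AB\in[b_k,b_k+A]$, then no $\delta(w)>0$ can push $C+AB$ past $b_{k+1}$ (as $A+D<|R_k|$), while the perturbation $\delta(w)=-A$ (if $r$ is odd) or $-2A$ (if $r$ is even), realized by a suitable $w\in S_r$ with $|\delta(w)|\le D$, pushes $C+AB$ below $b_k$ and hence strictly lowers $\lceil 2\sqrt{\cdot}\rceil$; thus $|w^{-1}gw|\le|g|$ for every $w$ with strict inequality for some $w$, giving $\mathcal{S}_r(g)<|g|$ and $\kappa_r^{\mathcal S}(g)>0$. (iii) If $C+AB$ lies in the top slab $[b_{k+1}-A,\ b_{k+1})$, the mirror-image argument gives $\kappa_r^{\mathcal S}(g)<0$. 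Finally a counting step: the admissible coprime pairs with $A>B>0$ and $A+B\le\epsilon n$ number $\asymp\epsilon^2 n^2$, the admissible $k$ number $\asymp n$, and for each such $(A,B)$ and $k$ the relevant slab of $R_k$ contains $\asymp A$ integers $C$ (cases (ii), (iii)) or $\asymp n$ integers $C$ (case (i)); summing, each of the three sub-families meets $B_n$ in $\gtrsim_{\epsilon,r}n^4$ elements, hence has positive density.

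The main obstacle is the bookkeeping, not the idea: one must fix the constants so that $2D<|R_k|$ uniformly over the chosen ranges, check that all conjugates remain strictly inside the high-height regime (so that the $2\lceil 2\sqrt\cdot\rceil$ formula applies to each $w^{-1}gw$), verify in cases (ii) and (iii) that the desired boundary-crossing shift is genuinely the value $\delta(w)$ of some $w\in S_r$ — this is where $\gcd(A,B)=1$ and a small parity check on $r$ enter — and then assemble the density estimate from the slab counts. Each of these is routine, but together they carry essentially all of the work.
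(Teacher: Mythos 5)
Your argument is correct, but it takes a genuinely different route from the paper. You work in the high-height regime and reduce everything to the clean algebraic fact that conjugation fixes $A,B$ and shifts the central coordinate by $\delta(w)=A\beta(w)-B\alpha(w)$, which depends only on the abelianization of $w$ and is bounded by $(A+B)r$; the sign of $\kappa_r^{\mathcal S}(g)$ is then read off from where $C+AB$ sits inside a level interval of $x\mapsto\lceil 2\sqrt{x}\rceil$, these intervals having length comparable to $n$ and hence much longer than the maximal shift once $A+B\le\epsilon n$ with $\epsilon r$ small. The paper instead works in the low-height regime, where the length is $2\lceil C/A\rceil+A+B$: there the effect of a conjugator is analyzed through the net number $t$ of occurrences of $a$, the sign of curvature is governed by the residues of $C\pm Bt$ modulo $A$, and the three signs come from three arcs of residues inside a sector $\tfrac{1}{5r}A\le B\le\tfrac{2}{5r}A$, followed by an explicit count of low-height elements in an annulus. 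Your central-shift formulation buys uniformity: it treats all conjugators in $S_r$ at once (the paper's separate observation that conjugation by $b^{\pm1}$ is harmless becomes the special case $\delta=\pm A$), and it replaces the residue bookkeeping by a one-dimensional ``slab'' argument; the paper's approach buys an explicit mod-$A$ periodic description of which elements carry which sign. One small simplification to your remaining verification: you do not need $\gcd(A,B)=1$ or any parity discussion to realize the boundary-crossing shift, since $w=b^{-r}$ (resp.\ $w=b^{r}$) lies on the sphere of radius exactly $r$ and gives $\delta=-Ar$ (resp.\ $+Ar$), which crosses the lower (resp.\ upper) endpoint from any slab of width $A$ (taking the slab half-open to handle $r=1$); dropping the coprimality condition only enlarges your sets, so the density count is unaffected.
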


\begin{proof}
	When the spherical comparison radius was kept at 1, it was only required that $A>B>0$ and $C>0$.
	As we increase the spherical comparison radius to $r>1$, we will now require additional restrictions:
	We will be considering the curvature of elements in a subset $U\subseteq B_n$ for some $n>2r$.
	We will study elements in the annulus $B_k\smallsetminus B_{2r}$, where $k$ is chosen significantly larger than $2r$ and also $k<n-4r$. 
	This is so that conjugating something in the annulus remains in $B_n$. 
	Define $U$ to be the subset of $B_k\setminus B_{2s}$ satisfying:
	\begin{enumerate}  
		\item $C-As\geq 0$
		\item $A-B\geq 2s$
	\end{enumerate} 
	Condition $(1)$ is to ensure that conjugating by $a^s$ (and $b^s$) does not lower the value of $C$ below 0, which would invalidate our word length formula.
	Condition $(2)$ is akin to saying that $A$ and $B$ are sufficiently larger than $s$.
	Of course, adding these conditions runs the risk of restricting our set to one that is too small to make positive measure arguments, but it will be shown later that $U$ is large enough for our purposes as long as $k$ is significantly larger than $s$.

	Let $a^Ab^Bc^C\in U$.
	Let $w=a^{A'}b^{B'}c^{C'}\in B_s$, and consider $|w^{-1}a^Ab^Bc^C w|$.
	First note that since $c$ is central, any instance of $c$ in $w$ will cancel itself out upon conjugation, and will thus not change word length.
	Next, notice by the observation made in \cite{medscale} that conjugating by $b^{\pm}$ changes $C$ by adding and subtracting $A$, and so conjugation by $b^\pm$ has no effect on curvature (notice that since $a^Ab^Bc^C$ is in $B_n\setminus B_{2r}$ we can subtract $2r$ from the wordlength without issue).
	Thus we must only consider how many occurrences of $a$ there are in $w$.
	Let there be $t\leq r$ occurrences of $a$ in $w$ (implicitly we set $t$ to be the net sign of the occurences of $a$ and $a^{-1}$, which without loss of generality we assume is positive).
	As noted in \cite{medscale}, since $A$ and $B$ are kept constant, the change in word length upon conjugation by $a$ or $a^{-1}$ is dependent on the quantities $C-B$ mod $A$ and $C+B$ mod $A$ respectively.
	Precisely, for $C=kA+s$ we have: 
	$$\lceil (C+B)/A\rceil = \bigg\{
		\begin{matrix} k+2, \; s > A-B \\ k+1, \; 
			s\leq A-B 
		\end{matrix}$$ 
			$$\lceil (C-B)/A \rceil = \bigg \{ 
				\begin{matrix} k+1, \; s>B \\ k, \; s \leq B \end{matrix}$$

	Now, since there are $t$ occurrences of $a$ in $w$ (and $t$ occurrences of $a^{-1}$ in $w^{-1}$), we replace $C+B$ with $C+Bt$ and $C-B$ with $C-Bt$.
	Thus, our new formulas are: 
	$$\lceil (C+Bt)/A\rceil = \bigg\{
		\begin{matrix} k+2, \; s > A-Bt \\ k+1, \; s\leq A-Bt \end{matrix}$$ 
		$$\lceil (C-Bt)/A \rceil = \bigg \{ 
			\begin{matrix} k+1, \; s>Bt \\ k, \; s \leq Bt \end{matrix}$$

	Let $r>0$, so we are considering elements from the annulus $B_k\setminus B_{2r}$, with $k>>2r$.
	First note that low height elements with positive density in $B_k\setminus B_{2s}$ have positive density in $B_n$: The measure of low height elements in $B_k\setminus B_{2r}$ is given by 
	$$\int_0^k \int_0^{k-A}(A^2-AB)dAdB-\int_0^{2r}\int_0^{2r-A}(A^2-AB)dAdB=\frac{k^4}{24}-\frac{16r^4}{24}.$$
  Since $k>>r$ this quantity is of the order $O(k^4)$, which has positive measure in $B_k$ since $|B_k|$ is also of this order.
	It is also clear that $B_k\setminus B_{2r}$ has positive density in $B_n$, so we can even say that low height elements in our annulus take up a positive proportion of the original ball $B_n$.
	Elements in the sphere of radius $r$ in $H(\integers)$ can be written $a^tb^{B'}c^{C'}$ where $|a^tb^{B'}c^{C'}|\leq r$ and $0\leq t\leq r$.
	For each $t$, we divide the possible values $s$ can take into three cases: $1\leq s\leq Bt$ (Case $X_t$), $Bt\leq s \leq A-Bt$ (Case $Y_t$) and $A-Bt\leq s\leq A-1$ (Case $Z_t$).
	If a subset of remainders satisfies $X_t$,$Y_t$ or $Z_t$ for all $t\leq r$, say that the subset satisfies $X$, $Y$ or $Z$ respectively.
	Consider the sector satisfying $\frac{1}{5r}A\leq B \leq \frac{2}{5r}A$.
	Then at least $\frac{1}{5r}$ of the possible remainders satisfy Case $X$ (in particular, at least $\frac{1}{5r}$ of the remainders satisfying Case $X_1$, and hence satisfy Case $X_t$ for all $t\geq 1$).
	Similarly, at least $\frac{1}{5r}$ satisfies Case $Y$ and at least $\frac{1}{5r}$ satisfies Case $Z$.
	For elements satisfying Case $X$, $\lceil \frac{C\pm Bt}{A}\rceil=k+1,k$, so $\kappa_r^\mathcal{S}(g)>0$.
	For elements satisfying Case $Y$, $\lceil\frac{C\pm Bt}{A}\rceil=k+1,k+1$ so $\kappa_r^\mathcal{S}(g)=0$.
	For elements satisfying Case $Z$, $\lceil\frac{C\pm Bt}{A}\rceil=k+2,k+1$ so $\kappa_r^\mathcal{S}(g)<0$.
	Thus we have shown that there is a positive proportion set in $B_k$ where the sign of curvature repeats periodically mod $A$ in the low height case for any spherical comparison radius $r$.

	Of course, periodicity in the curvature values is only useful if it can be shown that the set $U\cap \{$low height elements$\}$ contains ``many'' full periods mod $A$.
	First we must check that $A^2-AB\geq A \geq B+2r$ for most choices of $A$, where the first inequality establishes the low height case and where the second inequality is Condition (2) defining $U$.
	Note that for $A\geq 5r$ (and with our bounds on $B$ in place), we have $A^2-AB\geq A \geq B+2r$ to be equivalent to $25r^2-10r\geq 5r \geq 2+2r$, which is true for all $r\geq 1$.
	Since $k>>r$, most words in our ball satisfy $A\geq 5r$.
	Also note that $A\geq 5r$ will always result in a word of length greater than $2r$, which agrees with our original restriction to the annulus $B_k\setminus B_{2r}$.

	To fully restrict to $U$ we must impose the additional condition $C-Ar\geq 0$, and show that this still results in a set with positive density.
	Using the low-height word length formula, we find the following bounds for possible values of $C$ for words in $B_k$: Let $w\in B_k\setminus B_4$, so $|w|=l$ with $4\leq l\leq k$.
	Then for all $|A+B|\leq l$ we have $l = 2\lceil \frac{C}{A}\rceil + (A+B)$, so $\frac{l-(A+B)}{2}=\lceil\frac{C}{A}\rceil$.
	Then: $A(\frac{l-(A+B)}{2}-1)\leq C \leq A(\frac{l-(A+B)}{2}+1)$, or $A(\frac{l-(A+B)}{2})-A\leq C \leq A(\frac{l-(A+B)}{2})+A$.
	This means that for all values of $A$ and $B$ satisfying $A+B\leq l-2$ there are $2A$ values of $C$ in the low height case.
	Now we consider the second inequality.
	In particular we check what values satisfy both $C\geq Ar$ and the lower bound on $C$: $Ar\leq A(\frac{l-(A+B)}{2})-A \Rightarrow 2(r+1)\leq l-(A+B)$.
	Then for each choice of $A$ and $B$ with $A+B=l-2(r+1)$, there are $2A$ choices for $C$ such that $w=a^Ab^Bc^C$ is of length $l$.
	Once again appealing to the fact that $k>>r$, it is clear that a large proportion of elements in $B_k$ satisfy both $C\geq Ar$ and $A\geq 5r$.
\end{proof}

Note that elements satisfying Cases $X$ and $Z$ are not the only elements of positive/negative curvature in our region, just the ones where we are ``guaranteed'' those curvatures, or perhaps which elements take on the maximum pos/neg curvature.
It is reasonable to expect that if an element satisfies $X_t$ for many $t\leq r$ then it should still have positive curvature (respectively $Z_t$ and negative curvature).

Note that as the radius increases this method finds less and less zero curvature, as elements satisfying Case $Y$ should be the only ones with strictly zero curvature.
This is perhaps to be expected: As $H(\integers)$ is not a uniformly flat group, we can only hope for at most local flatness, and considering larger subsets should result in less strictly flat geometry.
In fact, it can probably be shown that if we take the radius to an appropriate limit the only elements with zero curvature should be the central elements $c=[a,b]^k$.

It should be noted that these are also the dead end elements in $H(\integers)$.
In \cite{alek}, it was shown that any geodesic word in $H(\integers)$ is the prefix of a dead end word.
The idea of the proof is to study the projection of $H(\integers)\rightarrow \integers^2$.
Under this projection, dead end words appear as oriented minimal perimeter polyominoe, with the area of the polyominoe corresponding to the $k$ in $[a,b]^k$.
Under this same projection, geodesic words appear as paths that weakly travel right and up, right and down, left and up or left and down in $\integers^2$.
The authors show that given any finite geodesic word, you can ``close'' the path represented by this word into an oriented minimal perimeter polyominoe, hence this geodesic word is the prefix of a dead end word as required.
The propensity of dead end words (and hence backtrack words) may help explain why we can consistently find large amounts of positive curvature.

\section{Relationship with Ollivier's Ricci Curvature}

In this section, we draw direct comparisons between comparison curvature $\kappa$ and Ollivier's notion of discrete Ricci curvature \cite{ollivier}.

\begin{definition}
	Let $(X,d)$ be a metric space and $\nu_1,\nu_2$ be two probabilty measures on $X$.
	Then the {\em set of transportation plans} $\Pi(\nu_1,\nu_2)$ is the set of measures on $X\times X$ projecting to $\nu_1$ on the first coordinate and $\nu_2$ on the second.
\end{definition}

\begin{definition}
	Let $(X,d)$ be a metric space and $\nu_1$, $\nu_2$ be two probability measures on $X$.
	The {\em $L^1$-transportation distance} between $\nu_1$ and $\nu_2$ is $$\mathcal{T}_1(\nu_1,\nu_2):=inf_{\zeta\in \Pi(\nu_1,\nu_2)}\int_{(x,y)\in X\times X}d(x,y)d\zeta(x,y).$$
\end{definition}

Using these definitions, we define Olliver's Ricci curvature $\kappa^*$ in our setting:

\begin{definition}
Let $(G,S)$ be a discrete group with finite generating set.
 Let $x,y\in G$.
 Then the {\em Ricci curvature in the direction $(x,y)$} is: $$\kappa^*(x,y)=1-\frac{\mathcal{T}_1(B_x,B_y)}{d(x,y)}$$ where $B_x$ and $B_y$ are the uniform distribution on unit balls at $x$ and $y$ respectively.
\end{definition}

Ollivier's original definition allows for $m_x$ to be any probability measures that vary measurably with $x$, but we have chosen this narrower definition to strengthen the parallels with $\kappa_1^S$.
Recall that $\kappa_1^S(x,y)=1-\frac{\GenCon(x,y)}{d(x,y)}$, where $\GenCon(x,y)$ is the average over the distance between $xs$ and $ys$ for all $s\in S$.

Notice that as each term $d(xs,ys)$ is given equal weight in $\GenCon(x,y)$, every point in the unit balls at $x$ and $y$ are given the same mass $\frac{1}{|S|}$.
Also notice that $\GenCon(x,y)$ chooses a very specific transportation plan between these distributions, namely the plan that takes the entire mass at $xs$ and sends it to $ys$.
However, Ollivier's transportation curvature uses the $L^1$ transportation distance between the two unit balls at $x$ and $y$.
This means that the analog of $\GenCon(x,y)$, namely $\mathcal{T}(B_x,B_y)$ is chosen as the infimum over all transportation plans.
This would allow, for instance, plans where $xs_1$ sends its mass to $ys_2$ for $s_1\neq s_2$ in $S$, or $xs$ sends a fraction of its mass to each point in $m_y$.
Therefore, it is immediately clear that $\kappa^*(x,y)\geq \kappa_1^S(x,y)$.

This raises obvious questions: When is the inequality strict? When are these quantities equal? 
 
These questions can be rephrased as follows: When does the transportation plan given by $\GenCon(x,y)$ give an optimal transport between the distributions $B_x$ and $B_y$?

In \cite{brezi} it was shown that given two sets of $n$ points $X$ and $Y$ and a cost function $c: X\times Y\rightarrow \mathbb{R}$, the optimal transport between $X$ and $Y$ minimizing $c$ is always given by a permutation.
As $B_x$ and $B_y$ are uniform distributions, we can treat them simply as sets of equal size, with cost function given by the word metric in $(G,S)$.
Thus, the above result shows that an optimal transport between $B_x$ and $B_y$ is in fact given by sending the entire mass at each point in $B_x$ to a point in $B_y$.
This means that means that $\mathcal{T}_1(m_x,m_y)$ can be determined by considering only finitely many transport plans, those given by the permutations on a set of size $|m_x|$.
Thus, it makes sense to study the set of permutations $S\to S$. 
One of these corresponds to the optimal transportation plan in $\Pi(m_x, m_y)$. 
Note that $\GenCon(x, y)$ corresponds to picking the identity permutation. 

We can now show that in certain cases, $\mathcal{T}_1(m_x,m_y)=\GenCon(x,y).$

\begin{example}
Consider $\integers^n$ with the standard generating set $S=\{a_1^{\pm 1},a_2^{\pm 1},\dot\dots,a_n^{\pm 1}\}$.
For any $x, y$, we have that $\kappa(x,y)=0$, so $\GenCon(x,y)=d(x,y)$. 
By the above discussion we can describe the data of $\zeta\in \Pi(m_x,m_y)$ with a bijection from $S$ to itself, which by a slight abuse of notation we also call $\zeta$.

Then the transport cost (ignoring the normalizing factor of $\frac{1}{|S|}$) for any $\zeta$ is given by:
\begin{align*}
	\sum_{a\in S}d(xa,y\zeta(a)) &= \sum_{a\in S}\|(x-y)+(a-\zeta(a)\|\\
				     &\geq \|\sum_{a\in S} (x-y) + (a-\zeta(a))\|\\
				     &= \|\sum_{a\in S} (x-y)\|\\
				     &= \sum_{a\in S} d(x, y).
\end{align*}
In the last line $a$ and $\zeta(a)$ cancel as $\zeta$ is a permutation. 

The above in fact shows that for any abelian group Ollivier's curvature agrees with ours. 
\end{example}

\begin{example}
	Consider $F_n=\langle a_1,a_2,\dots,a_n\rangle$, the free group on $n$ generators. 
	For any $x,y\in F_n$ we have the following, 
	$$\GenCon(x,y)=\frac{1}{|S|}\sum_{a_i\in S} d(xs,ys)=\frac{1}{|S|}\sum_{a_i\in S}|s^{-1}x^{-1}ys|=d(x,y)+2-\frac{2}{n}$$
	(see \cite{medscale} for details).
	
	Consider a transport plan $\zeta\in \Pi(B_x,B_y)$.
	There is a unique geodesic between $x$ and $y$, which must pass through $xs$ and $yt$ for some $s, t\in S$. 
	For all other pairs $a, b\in S\setminus\{s, t\}$, the distance from $xa$ and $yb$ is $d(x,y)+2$, as the geodesic from $xa$ to $yb$ first goes through $x$, travels along the unique geodesic from $x$ to $y$, then travels to $yb$.
	With this in mind, there are essentially two type of transport plan available between $B_x$ and $B_y$: 
	The first sends $xs$ to $yt$, and then arbitrarily sends each $xa$ to a choice of $yb$, giving a total cost of 
	$$\frac{1}{n}(d(x,y)-2+(n-1)(d(x,y)+2))=\frac{1}{n}(nd(x,y)+2(n-2))=d(x,y)+2-\frac{2}{n}$$.
	The second type sends $xs$ to some $ys'$, $s'\neq t$ and sends some other $xt'$ to $yt$, then arbitrarily sends all other points in $B_x$ to points in $B_y$.
	The distance from $xs$ to $ys'$ and from $xt'$ to $yt$ are both equal to $d(x,y)$, and the distance for all other choices is $d(x,y)+2$, again giving us a total cost of $d(x,y)+2-\frac{2}{n}$ 
	(note that $\GenCon(x,y)$ is one of the possible plans given by this second protocol).
	In either case, the total cost is equal to the cost of the plan given by $\GenCon(x,y)$.
\end{example}

In both the above cases we have shown that Ollivier's curvature agrees with comparison curvature. 
A naive hope is that this would hold true for all finitely generated groups. However, it is not difficult to find a counterexample:

\begin{example}
	Let $G=\langle s,t | s^2=t^2=1,sts=tst\rangle = S_3$.
	$$\GenCon(s,e)= \frac{1}{2}(|sss| + |tst|) = 2$$
	In comparison, let $\sigma(s)=t$, $\sigma(t)=s$.
	Consider the transport plan $\zeta$ that sends $xs$ to $y\sigma(s)$ for $x,y\in G$.
	The cost of $\zeta(B_s,B_e)$ equals $$\frac{1}{2}(d(s, st) + d(t, ss)) = 1$$
	Thus by choosing a different transport plan, we were able to lower the transport cost from the one given by $\GenCon(s,e)$.
\end{example}

However, one may hope that the optimal permutation does not depend on the choice of $g$ for which we are measuring the curvature. 
However, the previous example shows that this is not the case. 
Indeed, if we repeat the above calculations for plans between $sts$ and $e$, we find that $\GenCon(sts,e)$ is the optimal transportation plan and not $\zeta$. 

Thus for each group $G$ with generating set $S$ we get a function $\phi_S\colon G\to \sym(S)$ which sends an element $g$ to the permutation giving the discrete Ricci curvature $\kappa(g)$. 
More generally we can consider the sequence of functions $\phi_S^r\colon G\to \sym(B_r)$ which gives the permutation giving $\kappa_r^{\mathcal{B}}(g)$ and $\psi_S^r\colon G\to \sym(S_r)$ which gives the permutation giving $\kappa_r^{\mathcal{S}}(g)$. 
Note that there could be several permutation realising the curvature of a group element. 
For instance in the free group we can take $\psi_S^1(a) = e$ or the permutation which interchanges $b$ and $b^{-1}$, these both realise the curvature of $a$. 
We pose some questions relating to these function. 
\begin{itemize}
	\item For which groups is $\phi_S^r$ the trivial map? These are the groups for which $\GenCon$ is the optimal transport plan. 
	\item Are there any groups for which $\phi_S^r$ are non-trivial homomorphisms?
	\item Is $\phi_S^r = \prod_{i=0}^r \psi_S^i$? 
	\item If the above fails, is it still the case (as in $\GenCon$) that $\phi_S^r$ preserves the sphere of radius $l$ for all $l\leq r$?
\end{itemize}

\bibliographystyle{alpha}
\bibliography{bib}

\end{document}